\documentclass[12pt]{article}
\usepackage[latin1]{inputenc}
\usepackage{amsmath}
\usepackage{amsfonts}
\usepackage{amssymb}
\usepackage[american]{babel}
\usepackage{graphicx}
\usepackage{subcaption}
\usepackage{amsthm}
\usepackage{color}
\usepackage{xcolor}
\usepackage{url}
\usepackage[normalem]{ulem}
\usepackage{tikz}
\usepackage{hyperref}
\usetikzlibrary{arrows}
\usetikzlibrary{arrows.meta}
\usetikzlibrary{angles,quotes} % Librerie necessarie per angoli ed etichette

\definecolor{angleRed}{RGB}{255, 192, 192}   % Rosa/Rosso chiaro
\definecolor{angleGreen}{RGB}{220, 240, 220} % Verde chiaro

\begin{document}

\title{%Complex flows on flower snarks
On the $2$-dimensional flow number of the Flower snarks}{}

\newtheorem{theorem}{Theorem}
\newtheorem{proposition}[theorem]{Proposition}
\newtheorem{lemma}[theorem]{Lemma}
\newtheorem{definition}[theorem]{Definition}
\newtheorem{example}[theorem]{Example}
\newtheorem{corollary}[theorem]{Corollary}
\newtheorem{conjecture}[theorem]{Conjecture}
\newtheorem{remark}{Remark}
\newtheorem{problem}{Problem}
\newtheorem{claim}{Claim}

\author{Davide Mattiolo\footnote{Department of Computer Science, KU Leuven Kulak, 8500 Kortrijk, Belgium. Email: \href{mailto:davide.mattiolo@kuleuven.be}{davide.mattiolo@kuleuven.be}}, Jozef Rajn\'{i}k\footnote{Department of Computer Science, Comenius University in Bratislava, Slovakia, Email: \href{mailto:jozef.rajnik@fmph.uniba.sk}{jozef.rajnik@fmph.uniba.sk}}}
\maketitle

\begin{abstract}
    Let $r\ge 2$ be a real number, $d$ a positive integer. A $d$-dimensional nowhere-zero $r$-flow, or $(r,d)$-NZF, on a graph $G$ is an orientation of $G$ together with a function $f\colon E(G)\to \mathbb{R}^d$, such that for all $e\in E(G)$, the Euclidean norm of $f(e)$ lies in the interval $[1,r-1]$, and for every $v\in V(G)$ the sum of all incoming flow values at $v$ equals the sum of all outgoing ones. The $d$-dimensional flow number of $G$ is the parameter $\phi_d(G)=\inf \{r\colon G$ has an $(r,d)$-NZF$\}$. 
    
    In this paper we provide a lower bound for the $2$-dimensional flow number of the the Flower snark. In particular, together with a previous numerical result by the authors, we prove that $\phi_2(J_n) \in [1 + 2 \sin\frac{5}{22}\pi, 2.387893647]$, where $J_n$ denotes the Flower snark on $4n$ vertices.
\end{abstract}

{\bf Keywords:} Nowhere-zero flow, vector flow, snark, Flower snark

{\bf MSC:} 05C21 (Flows in graphs)

\section{Introduction}\label{section intro}

Let $r\ge 2$ be a real number, $d$ a positive integer, and $G$ a graph. A {\em $d$-dimensional nowhere-zero $r$-flow}, or $(r,d)$-NZF, on $G$ is an orientation of $G$ together with a function $f\colon E(G)\to \mathbb{R}^d$, such that for all $e\in E(G)$, the Euclidean norm of $f(e)$ lies in the interval $[1,r-1]$, and for every $v\in V(G)$ the sum of all incoming flow values at $v$ equals the sum of all outgoing ones (Kirchhoff's law). For a graph $G$, the $d$-dimensional flow number is the parameter $\phi_d(G)=\inf \{r\colon G$ has an $(r,d)$-NZF$\}$. It is well-known that $\phi_d$ is the minimum for every bridgeless graph, see \cite{tarsizhang, MMRT_d-dim-flows}.

Vector flows have recently received attention as they generalize the classical notion of nowhere-zero flow. Indeed, an $(r,1)$-NZF corresponds to the well-known notion of a circular nowhere-zero $r$-flow. One of the major open problems in this area is the $5$-Flow Conjecture of Tutte \cite{tut}, which claims that every bridgeless cubic graph $G$ verifies $\phi_1(G)\le5$.
For $d\ge3$, a conjecture of Jain \cite{DeVos_unit_vector} claims that every bridgeless graph has a $(2,d)$-NZF.

In this paper, we focus on the case $d=2$.
It is shown in \cite{MMRT_d-dim-flows} that every bridgeless graph $G$ verifies $\phi_2(G)\le 1+\sqrt5$, and that the Oriented $5$-Cycle Double Cover Conjecture \cite{arc, Jaeger} implies $\phi_2(G)\le \tau^2$ for every bridgeless graph $G$, where $\tau$ denotes the Golden Ratio.

The parameter $\phi_2(G)$ is known for very few classes of graphs, and its computation seems very hard. In \cite{Tho}, Thomassen proved that a cubic graph $G$ is bipartite if and only if $\phi_2(G)=2$; and that a planar graph $G$ has $\phi_2(G)=2$ if and only if its dual
graph is homomorphic to a subgraph of the unit distance graph. Moreover, in \cite{MMRT_wheels}, the authors computed the $2$-dimensional flow number of wheel graphs and the prism graphs. Finally, in \cite{MMRT_d-dim-flows}, the authors proved that a $3$-edge-colorable cubic graph $G$ has $\phi_2(G)\le 1+\sqrt2$, where equality holds if $G$ contains a triangle. Further interesting investigations of unit vector flows are done in \cite{Zhangandal}.

Given that the $2$-dimensional flow number of a $3$-edge-colorable cubic graph already has an upper bound (which is in some cases sharp), it would be interesting to understand the behavior of this parameter for the class of snarks ($2$-connected cubic graphs non-admitting a $3$-edge-coloring). To our knowledge, there is no snark for which this parameter is precisely known. Let us mention that it is conjectured in \cite{MMRT_d-dim-flows} that the $2$-dimensional flow number of the Petersen graph is $1+\sqrt\frac{7}{3}$.

In this paper, we investigate the $2$-dimensional flow number of the Flower snarks, a family of snarks introduced by Isaacs \cite{Isaacs}. Let $n\ge3$ be an odd natural number.
Let $F_1,\dots,F_n$ be $n$ copies of the $6$-pole $F$ in Figure \ref{fig:flower construction}. By a $6$-pole, we mean a graph with $6$ \emph{dangling} edges, i.e.\ edges incident with only one vertex; for more formal background of this notion, we refer the reader to \cite{Fiol, Nedela}. We consider the labelings depicted in Figure \ref{fig:flower construction}.
For every $i\in \{1,\dots,n\}$, we denote the copy of the dangling edge $x \in \{e_1, e_2, e_3, e_1', e_2', e_3'\}$ in the $6$-pole $F_i$ by $x^i$.
The Flower snark $J_n$ is constructed by joining together these $6$-poles by performing the junction of the edge $(e_j')^i$ with the edge $(e_j)^{i+1}$ for each $i \in \{1, \dots, n\}$ and $j \in \{1, 2, 3\}$, where the indices are taken modulo $n$.

\begin{figure}
\begin{center}
    \begin{tikzpicture}[scale=1.5]
  % Coordinates
  \coordinate (A) at (5.01, 6.00);
  \coordinate (B) at (5.00, 7.00);
  \coordinate (C) at (4.25, 7.75);
  \coordinate (D) at (5.74, 7.75);
  \coordinate (E) at (4.00, 6.00);
  \coordinate (F) at (6.00, 6.00);
  \coordinate (G) at (7.16, 7.73);
  \coordinate (H) at (3.50, 7.74);
  \coordinate (I) at (3.49, 8.24);
  \coordinate (J) at (5.75, 8.25);
  \coordinate (K) at (4.00, 8.25);
  \coordinate (L) at (6.49, 8.26);
  \coordinate (M) at (4.25, 8.25);
  \coordinate (N) at (6.00, 8.25);
  \coordinate (O) at (6.49, 7.76);

  % Drawing
  \draw (B) -- (C);
  \draw (B) -- (D);
  \draw (A) -- (B);
  \draw (A) -- (E);
  \draw (A) -- (F);
  %\draw (D_1) -- (G);
  \draw (C) -- (H);
  \draw (D) .. controls (5.75, 8.25) and (4.00, 8.25) .. (I);
  \draw (C) .. controls (4.25, 8.25) and (6.00, 8.25) .. (L);
  \draw (D) -- (O);
  \fill[black] (A) circle (3pt);
  \fill[black] (B) circle (3pt);
  \fill[black] (C) circle (3pt);
  \fill[black] (D) circle (3pt);
  \node[left, black] at (E) {$e_3$};
  \node[right, black] at (F) {$e'_3$};
  \node[left, black] at (H) {$e_2$};
  \node[left, black] at (I) {$e_1$};
  \node[right, black] at (L) {$e'_1$};
  \node[right, black] at (O) {$e'_2$};
\end{tikzpicture}
\caption{The Flower $6$-pole.}\label{fig:flower construction}
\end{center}
\end{figure}

The main result of this paper is the following.

\begin{theorem}
    \label{thm:lower-bound}
    For every odd $n\ge3$, $\phi_2(J_n) \ge 1 + 2 \sin\frac{5}{22}\pi.$
\end{theorem}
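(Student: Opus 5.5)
Suppose, for contradiction, that $J_n$ has an $(r,2)$-NZF $f$ with $r<1+2\sin\frac{5}{22}\pi$. Then every flow value has norm in $[1,r-1]$ with $r-1<2\sin\frac{5}{22}\pi$. We identify $\mathbb{R}^2$ with $\mathbb{C}$. At each cubic vertex the three incident flow values, oriented outward, sum to zero, so they form a triangle. If all three sides have length in $[1,\rho]$ with $\rho=r-1$, the law of cosines bounds the angle between any two of the outgoing vectors, and hence the amount by which a value can ``turn'' when passing through a vertex. The threshold $2\sin\frac{5}{22}\pi$ is the chord of an arc of angle $\frac{5}{11}\pi$. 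This suggests the argument will bound angles or turning numbers and show that a total rotation of a multiple of $\pi$ is forced, with $11$ small steps that cannot add up correctly.

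\textbf{Step 1: a local constraint on each Flower $6$-pole.} Reversing orientations if necessary, we orient the dangling edges consistently. We then study the map sending the input triple $(f(e_1^i),f(e_2^i),f(e_3^i))$ to the output triple $(f(e_1'^i),f(e_2'^i),f(e_3'^i))$. Kirchhoff's law at the cut shows that $f(e_1)+f(e_2)+f(e_3)=f(e_1')+f(e_2')+f(e_3')=:s$ is the same for every $F_i$. The spoke path $e_3\to A\to e_3'$ passes through the central vertex $A$ and the edge $AB$. The other two strands cross: $e_1$ exits at $e_1'$ via $D$ and $C$, but $e_2$ enters at $C$ and $e_2'$ leaves at $D$, so the strands twist. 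The key quantitative lemma I would aim for is this. Under the norm constraint, the directions of the ``rim'' values $f(e_1),f(e_2)$ change in a controlled way across each $6$-pole. Moreover, because of the swap in the twisted part, the pair $(e_1,e_2)$ is exchanged across the $n$ poles. After an odd number of poles the two rim cycles merge into one closed cycle of length $2n$, so the total rotation accumulated around this cycle must be a multiple of $2\pi$.

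\textbf{Step 2: angle bookkeeping.} For each vertex we define the angle between consecutive flow values along the rim cycle. Using $\rho<2\sin\frac{5}{22}\pi$, the law of cosines shows that in any vertex triangle with sides in $[1,\rho]$, the angles lie strictly inside an interval determined by $\frac{5}{11}\pi$. At the central vertex $A$, with $f(AB)$ forced to have norm at least $1$, this yields a nonzero minimal turn at each pole. The twist must also produce a sign change (a reversal of orientation) across the pole. Summing over the odd number of poles, the total turning would have to equal an odd multiple of $\pi$ while lying strictly within a forbidden window. The constant $\frac{5}{22}$, that is, $11$ half-steps, suggests that a subconfiguration of five consecutive angle constraints suffices, independently of $n$. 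So I would first try to show that any $(r,2)$-NZF on $J_n$ induces, after merging poles, a flow on a small fixed configuration (for example, a $5$-cycle structure coming from $J_3$-like pieces) where the turning argument gives the contradiction.

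\textbf{Main obstacle.} The hard part is Step 1: extracting a clean, $n$-independent invariant from each $6$-pole. The flow space of $F$ has several degrees of freedom, and the sum $s$ need not vanish, so the rim values are not simply rotated. I would first try the case $s$ small versus $s$ large separately. When $|s|$ is large, the spokes $e_3^i$ carry large flow, and the vertices $A_i$ force contradictions with the bound $\rho$. When $|s|$ is small, the rim values approximately form a closed rotating system and the turning-number parity argument applies. Balancing these two regimes should produce exactly the threshold $2\sin\frac{5}{22}\pi$. Verifying that this optimisation lands on that constant is where I expect most of the work, and possibly case analysis, to lie.
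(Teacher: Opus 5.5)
Your proposal leaves the decisive step open. You correctly name ``extracting a clean, $n$-independent invariant from each $6$-pole'' as the main obstacle, but that invariant is the entire proof, and the routes you sketch toward it do not lead there.

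A turning-number count along the $2n$-rim cycle, or along the $n$-cycle through the $e_3$-strand vertices, carries no information by itself. The total turning is automatically a multiple of $2\pi$. The turn at such a vertex, e.g.\ from $z_3$ to $z_3'=z_3+v_3$, has magnitude in $(\alpha,\beta)$ but no controlled sign, since $v_3$ may point to either side of $z_3$. Nothing you state prevents these turns from cancelling. Accumulating per-pole estimates over $n$ poles would typically give constraints that weaken as $n$ grows, whereas the claimed bound is uniform in $n$.

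The reduction to a ``small fixed configuration'' runs the wrong way. Flows on $J_n$ extend to $J_{n+2}$ with the same values, so $\phi_2(J_{n+2})\le\phi_2(J_n)$, and you give no mechanism for contracting poles. Hence a bound for $J_3$ or $J_5$ would not transfer to larger $n$. The split into $|s|$ small or large is not needed, and nothing in it points to the constant $2\sin\frac{5}{22}\pi$. (Also, the vertex $A$ on the $e_3$-strand is a leaf of the claw; the centre is $B$, where $v_1+v_2+v_3=0$.)

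The missing idea is a sign attached to the triple $(z_1,z_2,z_3)$ on each set of connecting edges, which every pole is forced to flip. The paper obtains it by attaching the three leaf triangles to the outside of the centre triangle. All angles then lie in $(\alpha,\beta)$ with $\beta=2\arcsin\frac{\lambda^*}{2}=\frac{5}{11}\pi$. Lemma~\ref{lemma:two_angles}, $|\angle BAC-\angle ABD|\le 3\arcsin\frac{\lambda}{2}-\frac{\pi}{2}$, shows that among the six outer vectors the pairs $(a_i',a_{i+1})$ make an angle in $(-\frac{4}{11}\pi,\frac{3}{11}\pi)$. Every other pair that can occur together in a triple is more than $\frac{4}{11}\pi$ apart. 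This is exactly where the constant comes from: $\frac32\pi-\frac52\beta=3\beta-\pi$ precisely when $\beta=\frac{5}{11}\pi$.

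Triples are then classified as C1$^{\pm}$ (no close pair, sign from the cyclic order) or C2$^{\pm}$ (one close pair, sign from the angles at the lonely vector). The two cases, up to symmetry, for how $z_1,z_2,z_3$ sit among the $a_i,a_i'$ show that each pole sends C1$^{s}$ to C1$^{-s}$ and C2$^{s}$ to C2$^{-s}$. This is impossible around an odd cycle of poles.

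Your remark that the twist ``must produce a sign change'' points in this direction. However, the cyclic order of the three values is not controlled by the pole when two of them are close, which is why the second family C2 with its own sign rule is needed. Without this classification and the angle estimates behind it, the argument does not yet prove anything.
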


We will also prove that the sequence $(\phi_2(J_{2k+1}))_{k\ge 1}$ is non-increasing.
Together with a numerical result of \cite{MMRT_d-dim-flows}, we then get that, for every odd $n\ge5,$ $ \phi_2(J_n) \in [1 + 2 \sin\frac{5}{22}\pi, 2.387893647].$

In \cite{LukSkov, Steffen}, it was proved that $\phi_1(J_n)=4+\frac1n$ for every odd $n\ge 3$. We suspect that the sequence $\phi_2(J_n)$ is also strictly decreasing and we leave the following open problem.

\begin{problem}
    Is it true that $\inf\{\phi_2(J_n)\colon n\ge3$ odd number$\} = 1 + 2 \sin\frac{5}{22}\pi$?
\end{problem}

\section{Notation and preliminary results}

Let $f$ be an $(r,2)$-NZF on $J_n$. In every $6$-pole $F_i$, we fix the orientation and flow values shown in Figure \ref{fig:Flower_pole}.

In what follows, given two vectors $u,v\in \mathbb{R}^2$, we denote by $\angle(u,v)$ the angle in the interval $(-\pi,\pi]$ needed to rotate the vector $u$ till it overlaps $v$, i.e.\ till they have the same direction and orientation. Note that $\angle(u,v)$ is positive if and only if the rotation is counterclockwise.
If $A$, $B$, and $C$ are points in $\mathbb{R}^2$, we denote $\angle(A - B, C - B)$ by $\angle ABC$.

\begin{lemma}
\label{lemma:two_angles}
Let $ABC$ and $BAD$ be two positively oriented triangles whose sides all have lengths in the interval $[1, \lambda]$  for some $1 \le \lambda < 2$. Then $|\angle BAC - \angle ABD| \le 3\arcsin \frac \lambda2 - \frac\pi2$.
\end{lemma}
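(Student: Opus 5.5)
The plan is to reduce everything to one scalar parameter, the common side length $c=|AB|\in[1,\lambda]$. Since both triangles are positively oriented, $\angle BAC$ and $\angle ABD$ are the (positive) interior angles of $ABC$ at $A$ and of $BAD$ at $B$. Both are angles of a triangle adjacent to a side of length $c$, with the other two sides in $[1,\lambda]$. Note that every triple in $[1,\lambda]$ with $\lambda<2$ satisfies the triangle inequality. So both angles lie in the set of values
$$\theta(a,b,c)=\arccos\frac{b^2+c^2-a^2}{2bc},\qquad a,b\in[1,\lambda],$$
where $a$ is the side opposite the angle and $b$ the other adjacent side. Hence $|\angle BAC-\angle ABD|\le \max_{a,b}\theta(a,b,c)-\min_{a,b}\theta(a,b,c)=:\Delta(c)$, and it suffices to show $\Delta(c)\le 3\arcsin\frac\lambda2-\frac\pi2$ for all $c\in[1,\lambda]$.

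First I compute the extremes for fixed $c$. The cosine is decreasing in $a$, so the maximum angle has $a=\lambda$ and the minimum has $a=1$. For the maximum angle, $\partial_b\frac{b^2+c^2-\lambda^2}{2bc}=\frac{b^2-c^2+\lambda^2}{2b^2c}>0$, so $b=1$ and $\theta_{\max}(c)=\arccos\frac{1+c^2-\lambda^2}{2c}$. For the minimum angle we maximize $g(b)=\frac{b^2+c^2-1}{2bc}$ over $b\in[1,\lambda]$. This function is convex-like in $b$ (it decreases and then increases, with its minimum at $b=\sqrt{c^2-1}$), so its maximum is attained at an endpoint. Thus $\theta_{\min}(c)=\min\{\arccos\frac{c}{2},\ \arccos\frac{\lambda^2+c^2-1}{2\lambda c}\}$.

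At $c=1$ we get $\theta_{\max}=\arccos(1-\lambda^2/2)=2\arcsin\frac\lambda2$. Comparing endpoints, $\frac{\lambda^2}{2\lambda}=\frac\lambda2\ge\frac12$, so $\theta_{\min}=\arccos\frac\lambda2=\frac\pi2-\arcsin\frac\lambda2$. The difference is exactly $3\arcsin\frac\lambda2-\frac\pi2$, so the bound is attained at $c=1$.

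The main obstacle is proving that $\Delta(c)$ is maximized at $c=1$. I would handle the two branches of $\theta_{\min}$ separately and show that each candidate difference $\theta_{\max}(c)-\theta_{\min}(c)$ is non-increasing in $c$ on $[1,\lambda]$. Geometrically, as $c$ grows the maximal angle (opposite side $\lambda$, adjacent side $1$) shrinks, since it equals the angle at $A$ with $C$ on the unit circle around $A$ and $|BC|=\lambda$. I expect the minimal angle to shrink more slowly or to grow, which can be checked by differentiating in $c$. If this derivative comparison gets messy, I would instead write each angle via the law of sines or cosines and compare directly: for instance, show $\theta_{\max}(c)\le\theta_{\max}(1)$ and $\theta_{\min}(c)\ge\theta_{\min}(1)-(\theta_{\max}(1)-\theta_{\max}(c))$. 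A fallback is to argue geometrically by placing $A,B$ and tracking the admissible region for $C$, the lens $\{1\le|AC|\le\lambda,\ 1\le|BC|\le\lambda\}$, as $|AB|$ varies.
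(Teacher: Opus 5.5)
Your setup is sound. Fixing $c=|AB|$, bounding $|\angle BAC-\angle ABD|$ by $\Delta(c)=\theta_{\max}(c)-\theta_{\min}(c)$, your formulas for $\theta_{\max}(c)$ and $\theta_{\min}(c)$, and the evaluation $\Delta(1)=3\arcsin\frac\lambda2-\frac\pi2$ are all correct. Your two-branch formula for $\theta_{\min}(c)$ is in fact more careful than the paper, which simply asserts that the angle at $B$ is minimized by $AD=1$, $BD=\lambda$. That holds only when $c^2\le\lambda+1$, which is automatic for $\lambda\le\frac{1+\sqrt5}{2}$ and hence in the paper's application.

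However, the whole content of the lemma is the inequality $\Delta(c)\le\Delta(1)$ on $[1,\lambda]$, and you leave it at ``I expect\ldots''. Moreover, the route you propose, showing both branch differences are non-increasing in $c$, fails. Take $D_1(c)=\theta_{\max}(c)-\arccos\frac c2$ with $\lambda=1.9$: then $D_1(1.8)\approx0.944<D_1(1.9)\approx0.987$. Since $c^2>\lambda+1$ there, this is the active branch, so $\Delta$ itself is not monotone. This branch is easy anyway: $\theta_{\max}(c)\le\theta_{\max}(1)$ and $\arccos\frac c2\ge\arccos\frac\lambda2$ give $D_1(c)\le\Delta(1)$ termwise.

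The real difficulty is the other branch, $D_2(c)=\theta_{\max}(c)-\arccos\frac{\lambda^2+c^2-1}{2\lambda c}$. Termwise bounds fail here, because the subtracted angle drops below $\arccos\frac\lambda2$ as soon as $c>1$ and $c+1>\lambda^2$. So the decrease of the two angles must be coupled, and this is the idea your proposal lacks. Both angles live in one triangle: $D_2(c)=\varphi_c-\psi_c$, where $\varphi_c,\psi_c$ are the interior angles at $A$ and $B$ of the triangle with $|BC|=\lambda$, $|CA|=1$, $|AB|=c$.

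The paper takes $E$ on $BC$ with $|AE|=|BE|$. Then $D_2(c)$ is the angle at $A$ between $AE$ and $AC$, and $|AE|+|EC|=\lambda$. The paper then tracks $E$ along the ellipse with foci $A,C$ as $c$ decreases. A shorter route is the law of tangents: $\tan\frac{\varphi_c-\psi_c}{2}=\frac{\lambda-1}{\lambda+1}\cot\frac{\gamma_c}{2}$, where $\gamma_c$ is the angle at $C$. Since $|CA|$ and $|CB|$ are fixed, $\gamma_c$ increases with $c$. Hence $D_2$ is non-increasing and $D_2(c)\le D_2(1)=\Delta(1)$. With this step and the termwise bound on $D_1$, your argument becomes a complete proof.
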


\begin{proof}
Let $\varphi = \angle BAC$ and $\psi = \angle ABD$. It is sufficient to find the maximal value of $\varphi - \psi$, so we assume that $\varphi > \psi$. Firstly, we fix the length of $AB$ and determine the maximal value of $\varphi - \psi$. With fixed $AB$, the angle $\varphi$ reaches its maximum value for $AC = 1$ and $BC = \lambda$, while $\psi$ reaches its minimum value for $AD = 1$ and $BD = \lambda$. Then the triangles $ABC$ and $ABD$ are congruent, so it remains to find the maximum possible value of $\angle BAC - \angle ABC = \varphi - \psi$ among all triangles $ABC$ with fixed $BC = \lambda$ and $CA = 1$, and variable $AB \in [1, \lambda]$.

Let $E$ be the intersection points of $BC$ and the perpendicular bisector of $AB$. Then $\angle EAC = \angle BAC - \angle BAE = \varphi - \psi$. Also, $BE = AE$, so $AE + EC = \lambda$, thus, the point $E$ lies on the ellipse with focal points $A$ and $C$ and diameter $\lambda$. With decreasing $AB$,
the point $E$ moves along the ellipse, so the angle $EAC$ reaches the maximal value for $AB = 1$, which satisfies the triangle inequality since $1 \le \lambda < 2$.
% the angle $ACE$ decreases too and so $\angle EAC$ increases. So we obtain the maximal value of $\angle EAB = \varphi - \psi$ \dm{EAC?} for $AB = 1$.
Then $\varphi = 2 \arcsin \frac \lambda2$ and $\psi = \frac\pi2 - \frac\varphi2$. Therefore,
$$\varphi - \psi = \varphi - \frac\pi2 + \frac\varphi2 = \frac32 \varphi - \frac\pi2 = 3\arcsin \frac \lambda2 - \frac\pi2.$$\qedhere
\end{proof}

\begin{figure}
\begin{center}
    \begin{tikzpicture}[scale=1.5]

    every edge/.style = {draw, ->}

  % Coordinates
  \coordinate (A) at (5.01, 6.00);
  \coordinate (B) at (5.00, 7.00);
  \coordinate (C) at (4.25, 7.75);
  \coordinate (D) at (5.74, 7.75);
  \coordinate (E) at (4.00, 6.00);
  \coordinate (F) at (6.00, 6.00);
  \coordinate (G) at (7.16, 7.73);
  \coordinate (H) at (3.50, 7.74);
  \coordinate (I) at (3.49, 8.24);
  \coordinate (J) at (5.75, 8.25);
  \coordinate (K) at (4.00, 8.25);
  \coordinate (L) at (6.49, 8.26);
  \coordinate (M) at (4.25, 8.25);
  \coordinate (N) at (6.00, 8.25);
  \coordinate (O) at (6.49, 7.76);

  % Drawing
  \draw[-{Stealth[scale=1.5]}, shorten >=5pt, shorten <=0pt] (B) -- (C);
  \draw[-{Stealth[scale=1.5]}, shorten >=5pt, shorten <=0pt] (B) -- (D);
  \draw[{Stealth[scale=1.5]}-, shorten >=0pt, shorten <=5pt] (A) -- (B);
  \draw[{Stealth[scale=1.5]}-, shorten >=0pt, shorten <=5pt] (A) -- (E);
  \draw[-{Stealth[scale=1.5]}] (A) -- (F);
  %\draw (D_1) -- (G);
  \draw[{Stealth[scale=1.5]}-, shorten >=0pt, shorten <=5pt] (C) -- (H);
  \draw[{Stealth[scale=1.5]}-, shorten >=0pt, shorten <=5pt] (D) .. controls (5.75, 8.25) and (4.00, 8.25) .. (I);
  \draw[-{Stealth[scale=1.5]}] (C) .. controls (4.25, 8.25) and (6.00, 8.25) .. (L);
  \draw[-{Stealth[scale=1.5]}] (D) -- (O);
  \fill[black] (A) circle (3pt);
  \fill[black] (B) circle (3pt);
  \fill[black] (C) circle (3pt);
  \fill[black] (D) circle (3pt);
  \node[above, black] at (E) {$z_3$};
  \node[above, black] at (F) {$z'_3$};
  \node[left, black] at (H) {$z_2$};
  \node[left, black] at (I) {$z_1$};
  \node[right, black] at (L) {$z'_1$};
  \node[right, black] at (O) {$z'_2$}; 
  \node at (4.69,7.50) {$v_2$};
  \node at (5.27,7.50) {$v_1$};
  \node at (5.15,6.45) {$v_3$};
\end{tikzpicture}
\caption{An $(r,2)$-NZF on the Flower $6$-pole.}\label{fig:Flower_pole}
\end{center}
\end{figure}

\section{Proof of the lower bound}

\begin{proof}
    In this section, we prove Theorem \ref{thm:lower-bound}. Throughout the proof, we always take indices modulo $3$.
    
 Let $f$ be a %$(2 + c', 2)$-NZF
    $(1+\lambda, 2)$-NZF of the $6$-pole $F$, for $1 < \lambda < \lambda^* = 2\sin\frac{5}{22}\pi$. We denote the flow values of $f$ according to Figure \ref{fig:Flower_pole}. The main idea of our proof is to investigate how the triple $(z_1,z_2,z_3)$ is transformed into the triple $(z'_1,z'_2,z'_3)$. For this, we  define four configurations of triples of vectors, which will be called C1$^+$, C1$^-$, C2$^+$, and C2$^-$, and show that for %$c' < c$ 
    $\lambda < \lambda^*$ the $6$-pole $F$ admits only transitions $\text{C1}^+ \leftrightarrow \text{C1}^-$ and $\text{C2}^+ \leftrightarrow \text{C2}^-$ making it impossible to have an odd number of copies of $F$ in $J_n$. To do so, we compute possible values of the angles between the vectors used in $f$ depending on $\lambda$. The four configurations and the value of $\lambda^*$ are defined in such a way that no triple of vectors can be in more than one configuration. 

 If $u_1$, $u_2$, and $u_3$ are three vectors with zero sum, then we can arrange them to form the triangle $T = T(u_1, u_2, u_3)$ by placing the tail of $u_2$ on the head of $u_1$ and the tail of $u_3$ on the head of $u_2$. If all the vectors $u_1$, $u_2$, and $u_3$ are values of the flow $f$, the sides of the triangle $T$ have lengths from the interval $[1, \lambda] \subset [1, \lambda^*)$, so the internal angles of $T$ are from the interval $(\alpha, \beta)$, where
    $$\alpha = 2\arcsin\left(\frac{1}{2\lambda^*}\right) \qquad  \text{and}\qquad \beta = 2\arcsin\left(\frac{\lambda^*}{2}\right) = \frac{5}{11}\pi,$$ see Figure \ref{fig:alpha_beta}.

 \begin{figure}[h]
  \centering

\begin{tikzpicture}[
    dot/.style={circle, fill=black, inner sep=2pt}, % Stile per i vertici
    % line/.style={thick, color=black!70!brown},     % Stile e colore delle linee dei triangoli
    scale=2 % Scala generale per una migliore visualizzazione
]

\coordinate (T1_A) at (0, 0);       % Base sinistra
\coordinate (T1_B) at (1, 0);       % Base destra (lunghezza base = 1 unità)
\coordinate (T1_Top) at (0.5, 1.3737); % Vertice calcolato per un angolo di 40°

\node at (0.1,0.65) {$\lambda$};
\node at (0.9,0.65) {$\lambda$};
\node at (0.5,-0.15) {1};

% Disegna e colora l'angolo α
\pic [draw, line width=1pt, red, fill=angleRed, angle radius=0.6cm, %"$\alpha$",
text=red] 
    {angle = T1_A--T1_Top--T1_B};

\node at (0.5, 0.97) {\textcolor{red}{$\alpha$}};

\coordinate (T2_Origin) at (1.8, 0);

\coordinate (T2_A) at (1.8, 0);       
\coordinate (T2_B) at (3.8, 0);       
\coordinate (T2_Top) at (2.8, 0.8391); 

\node at (2.2,0.5) {$1$};
\node at (3.4,0.5) {$1$};
\node at (2.8,-0.15) {$\lambda$};

% Disegna e colora l'angolo β
\pic [draw, line width=1pt, green!50!black, fill=angleGreen, angle radius=0.5cm, %"$\beta$",
text=green!50!black, pic text options={yshift=-4pt}] 
    {angle = T2_A--T2_Top--T2_B};

\node at (2.8, 0.45) {\textcolor{green!50!black}{$\beta$}};

% Disegna il triangolo e i punti sui vertici
\draw (T1_A) -- (T1_B) -- (T1_Top) -- cycle;
\draw (T2_A) -- (T2_B) -- (T2_Top) -- cycle;

\end{tikzpicture}

\caption{The angles $\alpha$ and $\beta$.}\label{fig:alpha_beta}
\end{figure}
 
 We consider the triangle $T(w_1, w_3, w_2)$ at a fixed place, where $(w_1, w_2, w_3) \in \{(v_1, v_2, v_3), (v_1, v_3, v_2)\}$ is chosen in such a way that the triangle $T(w_1, w_2, w_3)$ is oriented clockwise; see Figure \ref{fig:triangles}. For each $i \in  \{1, 2, 3\}$, we also consider the one of two triangles $T(v_i, z_i, -v_i - z_i)$ and $T(v_i, -v_i - z_i, z_i)$ which intersect $T(w_1, w_3, w_2)$ only on its side $v_i$ (which is one of $w_1, w_2, w_3$).
 We obtain the configuration of the vectors $w_i$, $z_i$ and $-z_i'$ for $i \in \{1, 2, 3\}$. With reference to Figure \ref{fig:triangles}, the vectors $z_1$, $z_2$ and $z_3$ are three vectors from $\{a_1,a_2,a_3,a_1',a_2',a_3'\}$ such that no two of them are in the same triangle, and $-z_1'$, $-z_2'$, and $-z_3'$ are the remaining three vectors and they share a common triangle with $z_2$, $z_1$ and $z_3$, respectively.
  
 \begin{figure}[h]
 \centering
 \includegraphics[]{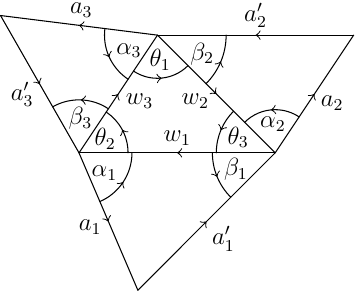}
 \caption{Configuration of four triangles obtained from the flow values of $F$ in a $(1+\lambda, 2)$-NZF with $1 < \lambda < \lambda^* (= 2\sin\frac{5}{22}\pi)$.}
 \label{fig:triangles}
 \end{figure}

 Let us denote the internal angles of the four triangles as follows $\theta_i = \angle(-w_{i-1}, w_{i+1})$, $\alpha_i = \angle(a_i, -w_i)$, and $\beta_i = \angle(w_i, -a_i')$ for $i \in \{1, 2, 3\}$. Recall that all of these angles are from the interval $(\alpha, \beta)$.
    Using Lemma \ref{lemma:two_angles}, we estimate angles between the following vectors:
 \begin{equation}\label{eq:angle1}
 \begin{aligned}
 \angle(a_i, a_{i+1}) &= \pi + \alpha_i - \theta_{i-1} - \alpha_{i+1} \\
 &\in \pi + \left(\tfrac\pi2 - \tfrac32\beta, \tfrac32\beta - \tfrac\pi2\right) + (-\beta, -\alpha)\\
 &\in \left(\tfrac32\pi - \tfrac52\beta, \tfrac\pi2 - \alpha + \tfrac32\beta\right) \subset \left(\tfrac{4}{11}\pi, \pi\right),
 \end{aligned}
 \end{equation}
 where one can easily verify that $\frac\pi2 - \alpha + \frac32\beta < \pi$; and, similarly,
 \begin{equation}\label{eq:angle2}
 \begin{aligned}
 \angle(a_i', a_{i+1}') &= \pi - \beta_i - \theta_{i-1} + \beta_{i+1}\\
 &\in \pi + (-\beta, -\alpha) + \left(\tfrac\pi2 - \tfrac32\beta, \tfrac32\beta - \tfrac\pi2\right)\\
 &\in \left(\tfrac32\pi - \tfrac52\beta, \tfrac\pi2 - \alpha + \tfrac32\beta\right) \subset \left(\tfrac{4}{11}\pi, \pi\right).
 \end{aligned}
 \end{equation}
    Furthermore,
 \begin{equation}\label{eq:angle3}
  \angle(a_i', a_{i+1}) = \pi - \beta_i - \theta_{i-1} - \alpha_{i + 1} \in (\pi - 3\beta, \pi - 3\alpha) \subset \left( -\tfrac{4}{11}\pi, \tfrac{3}{11}\pi \right) ,
 \end{equation}
 easily verifying that $\pi - 3\alpha < \frac{3}{11}\pi$;
 and finally,
 
 \begin{equation}\label{eq:angle4}
  \begin{aligned}
  \angle(a_{i+2}, a_{i}') &= -\pi + \alpha_{i+2} - \theta_{i+1} + \beta_i\\
  &\in -\pi + (\alpha, \beta) + \left( \tfrac\pi2 - \tfrac32\beta, \tfrac32\beta - \tfrac\pi2 \right)\\
  &\in \left(-\tfrac\pi2 + \alpha -\tfrac32\beta, \tfrac52\beta -\tfrac32\pi \right) \subset \left(-\pi, -\tfrac{4}{11}\pi \right).
  \end{aligned}
 \end{equation}

    According to these estimations, we call two vectors \emph{close} if they subtend an angle whose absolute value is smaller than  $\tfrac{4}{11}\pi$. We see that the vectors $a_i'$ and $a_{i + 1}$ are the only pairs of close vectors.
    Therefore, for a triple of vectors $(u_1, u_2, u_3)$, we will consider the following two configurations according to whether the triple contains close vectors:
 \begin{itemize}
 \item[C1] The triple contains no pair of close vectors. Moreover, we use C1$^+$ if we meet the vectors $u_1$, $u_2$, and $u_3$ along the unit circle in the positive (anti-clockwise) direction; otherwise, we use C1$^-$.
 \item[C2] Vectors $u_i$ and $u_j$ for some $i \ne j \in \{1, 2, 3\}$ are close. Then, we call the remaining vector $u_k \notin \{u_i, u_j\}$ \emph{lonely}. If at least one of the angles $\angle(u_k, u_i)$ and $\angle(u_k, u_j)$ is positive, we use the notation C2$^+$; and if at least one of the angles is negative, we use C2$^-$.
 \end{itemize}
    By definition, any triple of vectors is in exactly one of Configurations C1$^+$, C1$^-$, and C2. However, a triple of vectors can be in both Configurations C2$^+$ and C2$^-$.
    Note that the configurations of $(u_1, u_2, u_3)$ and $(-u_1, -u_2, -u_3)$ are the same, since $\angle(u, v) = \angle(-u, -v)$.
 
 Now, we describe the possible transitions between the triples of vectors $(z_1, z_2, z_3)$ and $(z_1', z_2', z_3')$ made by the $6$-pole $F$.

We distinguish two possible cases. Either the vectors $z_1$, $z_2$, and $z_3$ do not meet in any vertex (see Figure \ref{fig:triagnles-c1} for an example), or exactly two of them meet at a vertex (see Figure \ref{fig:triagnles-c2} for an example). Without loss of generality, these cases can be summarised as follows. 
 
 \begin{figure}[h]
 \centering
 \begin{subfigure}{0.45\linewidth}
 \centering
 \includegraphics[]{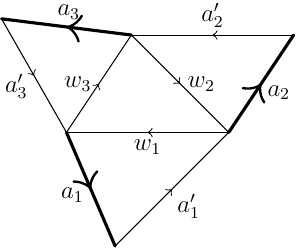}
 \caption{Case 1.}
 \label{fig:triagnles-c1}
 \end{subfigure}
    \quad
 \begin{subfigure}{0.45\linewidth}
 \centering
 \includegraphics[]{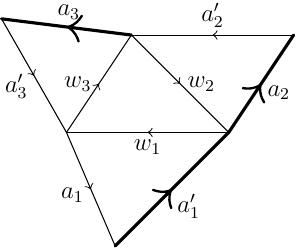}
 \caption{Case 2.}
 \label{fig:triagnles-c2}
 \end{subfigure}
    \caption{Two possible configurations for the vectors $z_1$, $z_2$, $z_3$.}
 \end{figure} 
 
 \paragraph{Case 1}
 We have $(z_1, z_2, z_3) = (a_{\sigma(1)}, a_{\sigma(2)}, a_{\sigma(3)})$ for some permutation $\sigma \in S_3$,
 see Figure \ref{fig:triagnles-c1}.
    By Estimations (\ref{eq:angle1}) and (\ref{eq:angle2}), we have $$\angle(a_i, a_{i+1}), \angle(a_i', a_{i+1}') \subset (\tfrac{4}{11}\pi, \pi).$$
 Thus, both the triples $(a_1, a_2, a_3)$ and $(a_1', a_2', a_3')$ are in Configuration C1$^+$. So the vectors $z_1, z_2, z_3$ are in Configuration C1$^s$ where $s = +$ or $s = -$ if $\sigma$ is an even or an odd permutation, respectively. Furthermore,
 the triple $(-a_{\sigma(1)}', -a_{\sigma(2)}', -a_{\sigma(3)}')=(z_1+v_1, z_2 + v_2, z_3 + v_3) = (z_2', z_1', z_3')$ is also in configuration C1$^s$.
 
 The triple $(z_1', z_2', z_3')$ is obtained from $(z_2', z_1', z_3')$ by swapping $z_1'$ and $z_2'$ so it is in Configuration C1$^{-s}$. Hence, in this case, the $6$-pole $F$ always switches configuration C1$^+$ to C1$^-$ and vice versa.
 
 \paragraph{Case 2} The triple $(z_1, z_2, z_3)$ is a permutation of $(a_i', a_{i+1}, a_{i+2})$ for some $i \in \{1, 2, 3\}$, and the triple $(z_1', z_2', z_3')$ consists of the remaining three vectors; see Figure \ref{fig:triagnles-c2}.
    By Estimations (\ref{eq:angle3}), (\ref{eq:angle1}), and (\ref{eq:angle4}), we have
    \begin{align*}
        \angle(a_i', a_{i+1}) &\in \left( -\tfrac{4}{11}\pi, \tfrac{3}{11}\pi \right),\\
        % \angle(a_{i + 1}, a_{i + 2}) &\in (\tfrac{4}{11}\pi, \pi), \text{ and}\\
        \angle(a_{i + 2}, a_{i + 1}) &\in (-\pi, -\tfrac{4}{11}\pi), \text{ and}\\
        \angle(a_{i + 2}, a_i') &\in (-\pi, -\tfrac{4}{11}\pi), 
    \end{align*}
    respectively.
 Thus, $a_i'$ and $a_{i + 1}$ are the only close vectors with $\angle(a_{i+2}, a_{i+1})$ and $\angle(a_{i+2}, a_i')$ negative, so the triple $(a_i', a_{i+1}, a_{i+2})$ is in Configuration C2$^-$ with the lonely vector $a_{i+2}$. The vectors $(z_1, z_2, z_3)$ are in Configuration C2$^-$, since permuting the vectors will keep the triple in Configuration C2$^-$.
    Also, by Estimations (\ref{eq:angle3}), (\ref{eq:angle4}), and (\ref{eq:angle1}), we have
    \begin{align*}
        \angle(a_{i+2}', a_{i}) &\in \left( -\tfrac{4}{11}\pi, \tfrac{3}{11}\pi \right),\\
        % \angle(a_{i}, a_{i + 1}') &\in \left(-\pi, -\tfrac{4}{11}\pi \right), \text{ and}\\
        \angle(a_{i + 1}', a_{i}) &\in \left(\tfrac{4}{11}\pi, \pi \right), \text{ and}\\
        \angle(a_{i + 1}', a_{i+2}') &\in (\tfrac{4}{11}\pi, \pi), 
    \end{align*}
    respectively. Thus, the triple $(a_{i+2}', a_i, a_{i+1}')$ as so as $(z_1', z_2', z_3')$ is in Configuration C2$^+$.
 
 Therefore, this case produces the transition $\text{C2}^- \rightarrow \text{C2}^+$. By symmetry, we also have the transition $\text{C2}^+ \rightarrow \text{C2}^-$.
 
 \medskip

 Therefore, if $f$ is a $(1+\lambda, 2)$-NZF of $J_n$ for $\lambda < \lambda^*$, then for every $i \in \{1, \dots, n\}$, the triplets $(z_1,z_2,z_3)$ and $(z'_1,z'_2,z'_3)$ are both of the same configuration type, i.e.\ either C1 or C2, but with different signature. Thus, it is not possible to obtain the same configuration after an odd number of transitions, and so $\phi_2(J_n) \ge 1+\lambda^* = 1 + 2 \sin\frac{5}{22}\pi$, for each odd $n \ge 3$.
\end{proof}

\section{Conclusion}

We conclude our paper by showing that the $2$-dimensional flow number of flower snarks is nonincreasing. It is sufficient to prove that $\phi_2(J_n) \ge \phi_2(J_{n + 2})$ for any $n \ge 3$. We show that this is true in a more general setting --  for any type of flow and also for even $n$.

\begin{lemma}
    For any integer $n \ge 3$, if the graph $J_n$ admits a flow $f$, then the graph $J_{n + 2}$ admits a flow using the same flow values as in the flow $f$.
\end{lemma}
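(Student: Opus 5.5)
The natural approach is to insert two new copies of the $6$-pole $F$ into the cyclic chain $F_1,\dots,F_n$ and to extend $f$ to them by reusing values $f$ already takes on some existing pole. Concretely, pick an index $i$ and replace the junction between $F_i$ and $F_{i+1}$ by the chain $F_i, F', F'', F_{i+1}$. On $F'$ we copy the flow of $F_{i+1}$, and on $F''$ we copy the flow of $F_i$. The problem is that the dangling-edge triples must match at each new junction. $F_i$ outputs $z'(F_i)$, while $F'$ (a copy of $F_{i+1}$) expects input $z(F_{i+1}) = z'(F_i)$, so that junction is fine. But $F'$ then outputs $z'(F_{i+1})$, whereas $F''$, a copy of $F_i$, expects $z(F_i)$; these generally differ. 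So naive copying does not work, and we need a pole that can be traversed \emph{backwards}.

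The fix is to use the reverse of a pole. If $f$ restricted to $F_i$ transforms the triple $(z_1,z_2,z_3)$ into $(z_1',z_2',z_3')$, then reversing all orientations (equivalently, negating all values) gives a flow on the same pole carrying $(-z_1,-z_2,-z_3)$ to $(-z'_1,-z'_2,-z'_3)$; the values have the same norms, and as flows on an undirected graph they are the same values. The $6$-pole $F$ is not obviously symmetric under swapping its input and output sides. However, from the figure, $e_3,e_3'$ are symmetric at $v_3$, and $e_1,e_2,e_1',e_2'$ are attached crosswise to the two top vertices: one vertex carries $e_2$ and $e_1'$, the other $e_1$ and $e_2'$. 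Hence the reflection of $F$ exchanging left and right maps $F$ to itself, sending $(e_1,e_2,e_3)$ to $(e'_2,e'_1,e'_3)$, after also swapping the two top vertices. So a pole read backwards is again a copy of $F$, up to the transposition of indices $1,2$.

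The plan is then as follows. Insert after $F_i$ a reversed copy $\bar F_i$ of $F_i$, which takes $z'(F_i)$ back to $z(F_i)$ up to the $1\leftrightarrow 2$ swap and orientation, followed by a fresh copy of $F_i$, which takes $z(F_i)$ to $z'(F_i)$ again. The output of the second copy then feeds $F_{i+1}$ exactly as before. Kirchhoff's law holds at every vertex because each new pole carries an honest copy of $f|_{F_i}$, possibly negated and reflected. The set of flow values used is unchanged if we regard a flow on an edge as an undirected quantity, that is, as $\pm$ the value with the corresponding orientation. This holds for any flow type (group-valued or vector-valued), since only negation and copying are used.

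The main obstacle is the crossing of the $1,2$ edges. Reversal composes with the transposition $(12)$ on the dangling indices, so the junctions $\bar F_i \to F_i$ and $F_i \to \bar F_i$ must be checked to match index by index. I will handle this by writing out explicitly the reflection automorphism of $F$ and verifying the three junction conditions. If a transposition remains, I will instead insert the pair $\bar F_i, F_i$ and also apply the reflection to the second copy, choosing whichever combination makes the index permutations cancel; with two inserted poles there is enough freedom for the two transpositions to cancel.
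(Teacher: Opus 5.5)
Your construction is the paper's. One pole is replaced by the chain $F$, reversed $F$, $F$, where the middle pole carries $f|_{F_i}$ read backwards. In Figure~\ref{fig:6pole_extension} that middle pole has $v_1,v_2,v_3$ directed into the central vertex and passes $(z_1',z_2',z_3')$ to $(z_1,z_2,z_3)$. Your remark that only copying and negation are used matches the paper's sense of ``same flow values''.

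The step you single out as the main obstacle, however, rests on a miscomputation.

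\textbf{The reflection has no transposition.} The reflection that exchanges the two top vertices sends the vertex carrying $e_2,e_1'$ to the one carrying $e_1,e_2'$, while exchanging inputs and outputs. So it maps $e_1\leftrightarrow e_1'$, $e_2\leftrightarrow e_2'$, $e_3\leftrightarrow e_3'$. The map $(e_1,e_2,e_3)\mapsto(e_2',e_1',e_3')$ that you wrote is instead the one that fixes the top vertices.

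\textbf{The junctions then match directly.} With the correct reflection, $\bar F_i$ satisfies $z_2'=v_1+z_1$ at the top vertex incident with $e_2$, $z_1'=v_2+z_2$ at the one incident with $e_1$, and $z_3'=v_3+z_3$ at the bottom vertex. These are just Kirchhoff's law of $f$ at the corresponding vertices of $F_i$. Hence all three junctions match index by index and your argument closes.

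\textbf{Your fallback would not have worked.} If $\bar F_i$ really required the input $(z_2',z_1',z_3')$, the junction $F_i\to\bar F_i$ would already fail. Nothing done to the subsequent copy can repair that. The correct remedy in such a situation is to compose with the automorphism of $F$ that exchanges the two top vertices together with $e_1\leftrightarrow e_2$ and $e_1'\leftrightarrow e_2'$. That removes the transposition from $\bar F_i$ itself.
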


\begin{proof}
    The graph $J_{n + 2}$ can be constructed from the graph $J_n$ by replacing the $6$-pole $F$ with the $6$-pole $F_3$ that consists of three serially joined copies of the $6$-pole $F$. As depicted in Figure \ref{fig:6pole_extension}, we can extend every flow on $F$ to the flow $f'$ on $F_3$ following the notation of the flow values on $F$ from Figure \ref{fig:Flower_pole}. The new flow $f'$ uses only flow values of $f$. Since all the dangling edges of $F_3$ have the same flow values as the corresponding dangling edges in $F$, the flow $f'$ on $F_3$ can be extended to a flow on the entire graph $J_{n + 2}$.
\end{proof}

\begin{figure}[h]
    \centering
\begin{tikzpicture}[
  scale=1.5,
  vertex/.style={circle, fill=black, minimum size=6pt, inner sep=0pt},
  edge/.style={draw, -{Stealth[scale=1.5]}}
]

  % Define a reusable macro to draw a single subgraph unit
  % #1 is the label suffix (1, 2, or 3) to keep node names unique
  \newcommand{\drawunit}[1]{
    \node[vertex] (B#1) at (0, 7) {};
    \node[vertex] (A#1) at (0, 6) {};
    \node[vertex] (C#1) at (-0.866, 7.5) {};
    \node[vertex] (D#1) at (0.866, 7.5) {};
  }

  % Tile the 3 units horizontally using tighter scopes (2.5cm instead of 3.5cm)
  \begin{scope}[xshift=0cm]
    \drawunit{1}
    \draw[edge] (B1) -- (C1) node[midway, below left] {$v_2$};
    \draw[edge] (B1) -- (D1) node[midway, below right] {$v_1$};
    \draw[edge] (B1) -- (A1) node[midway, right] {$v_3$};
  \end{scope}
  
  \begin{scope}[xshift=2.5cm]
    \drawunit{2}
    \draw[edge] (C2) -- (B2) node[midway, below left] {$v_1$};
    \draw[edge] (D2) -- (B2) node[midway, below right] {$v_2$};
    \draw[edge] (A2) -- (B2) node[midway, right] {$v_3$};
  \end{scope}
  
  \begin{scope}[xshift=5.0cm]
    \drawunit{3}
    \draw[edge] (B3) -- (C3) node[midway, below left] {$v_2$};
    \draw[edge] (B3) -- (D3) node[midway, below right] {$v_1$};
    \draw[edge] (B3) -- (A3) node[midway, right] {$v_3$};
  \end{scope}

  % ---------------------------------------------------------
  % External Coordinates for Inputs (Left) and Outputs (Right)
  % ---------------------------------------------------------
  \coordinate (InZ3) at (-1.5, 6);
  \coordinate (InZ2) at (-1.5, 7.5);
  \coordinate (InZ1) at (-1.5, 8.0); % Lowered slightly for flatter curve

  \coordinate (OutZ3) at (6.5, 6);     % Brought inward to match tighter spacing
  \coordinate (OutZ2) at (6.5, 7.5);
  \coordinate (OutZ1) at (6.5, 8.0);

  % ---------------------------------------------------------
  % Connections mapping to your hand-drawn sketch
  % ---------------------------------------------------------

  % 1. Bottom straight continuous line
  \draw[edge] (InZ3) -- (A1) node[at start, left] {$z_3$};
  \draw[edge] (A1) -- (A2) node[midway, below] {$z'_3$};
  \draw[edge] (A2) -- (A3) node[midway, below] {$z_3$};
  \draw[edge] (A3) -- (OutZ3) node[at end, right] {$z'_3$};

  % 2. Middle straight connections
  \draw[edge] (InZ2) -- (C1) node[at start, left] {$z_2$};
  \draw[edge] (D1) -- (C2) node[midway, below] {$z'_2$};
  \draw[edge] (D2) -- (C3) node[midway, below] {$z_2$};
  \draw[edge] (D3) -- (OutZ2) node[at end, right] {$z'_2$};

  % 3. Top curved connections (Flatter arcs with 30/150 angles)
  % Incoming z1 arches into D1
  \draw[edge] (InZ1) to[out=0, in=150] node[at start, left] {$z_1$} (D1);

  % C1 arches over to D2
  \draw[edge] (C1) to[out=30, in=150] node[midway, above] {$z'_1$} (D2);

  % C2 arches over to D3
  \draw[edge] (C2) to[out=30, in=150] node[midway, above] {$z_1$} (D3);

  % C3 arches out to the z'_1 exit
  \draw[edge] (C3) to[out=30, in=180] node[at end, right] {$z'_1$} (OutZ1);

\end{tikzpicture}
    \caption{Extension of the flow on $F$ to three copies of $F$}
    \label{fig:6pole_extension}
\end{figure}

\begin{corollary}\label{cor:nonincreasing}
    The sequence $(\phi_2(J_{2k + 1}))_{k \ge 1}$ of the flow numbers of the flower snarks is nonincreasing.
\end{corollary}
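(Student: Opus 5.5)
The plan is to derive Corollary~\ref{cor:nonincreasing} directly from the preceding lemma, applied to $2$-dimensional nowhere-zero flows. Fix $k\ge 1$ and set $n=2k+1\ge 3$. It suffices to show $\phi_2(J_{n+2})\le\phi_2(J_n)$.

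\textbf{Step 1 (attainment of the infimum).} Since $J_n$ is bridgeless, the introduction recalls (citing \cite{tarsizhang, MMRT_d-dim-flows}) that $\phi_2$ is attained as a minimum. Put $r=\phi_2(J_n)$. Then $J_n$ admits an $(r,2)$-NZF $f$: an orientation together with $f\colon E(J_n)\to\mathbb{R}^2$ satisfying Kirchhoff's law, with $\|f(e)\|\in[1,r-1]$ for every edge.

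\textbf{Step 2 (apply the lemma).} By the preceding lemma, $J_{n+2}$ admits a flow $f'$, that is, a function satisfying Kirchhoff's law at every vertex, all of whose values lie in the set of values of $f$. The one point to check is that the lemma's construction is a genuine $\mathbb{R}^2$-valued flow. The extension in Figure~\ref{fig:6pole_extension} is built purely from the local conservation relations of $F$. In the middle copy, all edges are reversed and the labels $v_1,v_2$ and $z_1,z_2$ are swapped. Conservation at its four vertices therefore reads as the negatives of the original relations, with indices $1,2$ swapped. These are valid identities in any abelian group, in particular in $\mathbb{R}^2$. Every edge of $J_{n+2}$ thus carries a value $f(e)$ for some $e\in E(J_n)$, so its norm lies in $[1,r-1]$. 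Hence $f'$ is an $(r,2)$-NZF on $J_{n+2}$, and $\phi_2(J_{n+2})\le r=\phi_2(J_n)$.

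\textbf{Step 3 (conclusion).} Since $k$ was arbitrary, $\phi_2(J_{2k+3})\le\phi_2(J_{2k+1})$ for all $k\ge1$, so the sequence is nonincreasing.

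Should one prefer not to rely on attainment of the minimum, the same argument works with an $(r',2)$-NZF for any $r'>\phi_2(J_n)$, giving $\phi_2(J_{n+2})\le r'$ for all such $r'$. The main (mild) obstacle is only the verification in Step 2 that the lemma preserves the norm constraints, and this is immediate because the value set is unchanged.
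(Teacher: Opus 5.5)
Your proposal is correct and is essentially the paper's argument: apply the preceding lemma (three serial copies of $F$ replacing one) to an $(r,2)$-NZF of $J_n$. The resulting flow on $J_{n+2}$ uses only values of $f$, so it satisfies the same norm bounds, giving $\phi_2(J_{n+2})\le\phi_2(J_n)$. Your extra checks only make explicit what the paper leaves to the figure: attainment of the minimum (or the $r'>\phi_2(J_n)$ variant) and the Kirchhoff check at the reversed middle copy.
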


Combining Corollary \ref{cor:nonincreasing} with the numerical construction of a $2$-dimensional flow on $J_5$ from \cite{MMRT_d-dim-flows}, we conclude that for each odd $n \ge 5$,
$$\phi_2(J_n) \in [1 + 2 \sin\frac{5}{22}\pi, 2.387893647] \subset [2.309721467, 2.387893647].$$

Unfortunately, we lack an exact description of the flow on $J_5$. 
Figure \ref{fig:transitions} illustrates transitions through each of the five $6$-poles $F$ around $J_5$. For each $i \in \{1, 2, 3, 4, 5\}$, the vectors $v_1^i$, $v_2^i$, and $v_3^i$ are depicted black with their numbers. The triple $(z_1^i, z_2^i, z_3^i)$ has a unique color for each $i$.

Note that, in this case, the triples of vectors $(z_1^i, z_2^i, z_3^i)$ alternate between Configurations $C2^-$ and $C2^+$ for $i \in \{2, 3, 4, 5\}$, while the triple $(z_1^1, z_2^1, z_3^1)$ is in Configuration $C1^-$. The angles between these three vectors, $\angle(z_3^1, z_2^1) \approx 0.3736\pi$ and $\angle(z_2^1, z_1^1) \approx 0.3742\pi$, are only slightly grater than $\frac{4}{11}\pi \approx 0.3636$.

\begin{figure}
    \centering
    \includegraphics[]{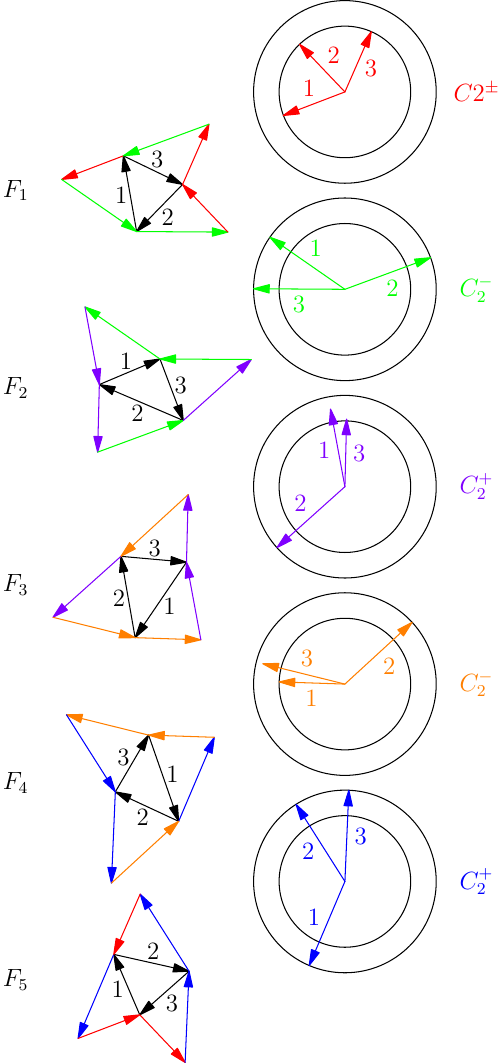}
    \caption{Transitions through each of the $6$-poles $F$ in $J_5$ determined by the flow constructed in \cite{MMRT_d-dim-flows}.}
    \label{fig:transitions}
\end{figure}

\section*{Acknowledgments}

We thank Giuseppe Mazzuoccolo and Gloria Tabarelli for fruitful discussions on the topic. The second author acknowledges partial support from the research grants APVV-23-0076, VEGA 1/0173/25, and VEGA 1/0613/26.

\end{document}